\newtheorem{theorem}{Theorem}
\newtheorem{lemma}[theorem]{Lemma}
\newtheorem{claim}[theorem]{Claim}
\newtheorem*{claim*}{Claim}
\theoremstyle{remark}
\def\eps{\varepsilon}
\def\E{\mathbb{E}}
\def\R{\mathbb{R}}
\def\Z{\mathbb{Z}}
\def\N{\mathbb{N}}
\def\PP{\mathbb{P}}
\def\T{\mathbb{T}}
\def\cV{\mathcal{V}}
\def\cS{\mathcal{S}}
\newcommand{\EE}{\mathbb{E}}
\renewcommand{\P}{\mathbb{P}}
\newcommand{\one}{\mathbf{1}}
\begin{document}

\title{Singularity of random symmetric matrices revisited}

\author{Marcelo Campos}
\address{Instituto Nacional de Matem\'atica Pura e Aplicada (IMPA). }
\email{marcelo.campos@impa.br}
\thanks{The first named author is partially supported by CNPq.}

\author{Matthew Jenssen}
\address{University of Birmingham, School of Mathematics.}
\email{m.jenssen@bham.ac.uk}

\author{Marcus Michelen}
\address{University of Illinois at Chicago. Department of Mathematics, Statistics and Computer Science.}
\email{michelen.math@gmail.com}

\author{Julian Sahasrabudhe}
\address{University of Cambridge. Department of Pure Mathematics and Mathematics Statistics (DPMMS).} 
\email{jdrs2@cam.ac.uk}

\begin{abstract}
Let $M_n$ be drawn uniformly from all $\pm 1$ symmetric $n \times n$ matrices. We show that the probability that $M_n$ is singular is at most
$\exp(-c(n\log n)^{1/2})$, which represents a natural barrier in recent approaches to this problem.  In addition to improving on the best-known previous bound of Campos, Mattos, Morris and Morrison of $\exp(-c n^{1/2})$ on the singularity probability, our method is different and considerably simpler.

\end{abstract}

\maketitle

\section{Introduction}
Let $A_n$ denote a random $n \times n$ matrix drawn uniformly from all matrices with $\{-1,1\}$ coefficients.  It is an old problem, of uncertain origin\footnote{See \cite{KKS} for a short discussion on the history of this conjecture}, 
to determine the probability that $A_n$ is singular. While a few moments of consideration reveals a natural \emph{lower} bound of $(1 + o(1))n^2 2^{-n+1}$, which comes from the probability that two rows or columns are equal up to sign, it is widely believed that in fact 
\begin{equation} \label{eq:sing-conj}
\P(\det A_n = 0 ) = (1 + o(1))n^2 2^{-n+1}\,.
\end{equation}
This singularity probability was first shown to tend to zero in 1967 by Koml\'os \cite{komlos}, who obtained the bound $\P(\det(A_n) = 0) = O(n^{-1/2})$.  The first exponential upper bound was established by Kahn, Koml\'{o}s, and Szemer\'{e}di~\cite{KKS} in 1995 with subsequent improvements on the exponent by Tao and Vu~\cite{TV-RSA,TV-JAMS} and Bourgain, Vu and Wood~\cite{BVW}.  In 2018, Tikhomirov \cite{Tikhomirov} settled this conjecture up to lower order terms by showing $\P(\det(A_n) = 0) = (1/2 + o(1))^n$.  Very recently, a closely related problem was resolved by Jain, Sah and Sawhney \cite{jain2020singularity}, who showed that the analogue of \eqref{eq:sing-conj} holds when the entries of $A_n$ are i.i.d.\ discrete variables of finite support that are not uniform on their support.  The conjecture \eqref{eq:sing-conj} remains open for matrices with mean-zero $\{-1,1\}$ entries.

The focus of this paper is on the analogous question for \emph{symmetric} random matrices.  
In particular, let $M_n$ denote a uniformly drawn matrix among all  $n \times n$ symmetric matrices with entries in $\{-1,1\}$.  
In this setting it is also widely believed that $\P(\det M_n = 0) = \Theta(n^2 2^{-n})$ as in the asymmetric case \cite{CMMM,costello-tao-vu,vu-randomdiscretematrices} although here much less is known.  For instance, the fact that $\P(\det M_n = 0) = o(1)$, was only resolved in 2005 by Costello, Tao and Vu \cite{costello-tao-vu}. Subsequent superpolynomial upper bounds of the form $n^{-C}$ for all $C$ and $\exp(-n^{c})$ were proven respectively by Nguyen  \cite{nguyen-singularity} and Vershynin \cite{vershynin-invertibility} by different techniques: Nguyen used an inverse Littlewood-Offord theorem for quadratic forms based on previous work by Nguyen and Vu \cite{nguyen-vu-1,nguyen-vu-2}, while Vershynin used a more geometric approach pioneered by Rudelson and Vershynin \cite{RV3,RV1,RV2}.

A combinatorial approach developed by Ferber, Jain, Luh and Samotij \cite{ferber2019counting} was applied by Ferber and Jain \cite{ferber-jain} in 2018 to prove that 
$\P(\det M_n = 0) \leq \exp\big(-c n^{1/4}(\log n)^{1/2}\big)$.  
Another combinatorial approach was taken by Campos, Mattos, Morris and Morrison \cite{CMMM} who achieved the bound $\P(\det M_n = 0) \leq \exp\big(-c n^{1/2}\big)$.  Their argument centers around an inverse Littlewood-Offord theorem inspired by the method of hypergraph containers.

The proofs of \cite{CMMM,ferber-jain,vershynin-invertibility} all follow the same general shape: divide all potential vectors $v$ for which we could have $M_nv = 0$ into ``structured'' and ``unstructured'' vectors, show that the unstructured vectors do not contribute, and union bound over the structured vectors. The main difficulty (and novelty) in these proofs arises in a careful understanding of the contribution of the \emph{structured} vectors.

While we have this method to thank for the recent successes on this problem, an important limitation was pointed out in \cite[Section 2.2]{CMMM} who argued that this method could not provide any improvement to the singularity probability beyond $\exp(-c\sqrt{n\log n})$, provided the randomness in the matrix is not ``reused''. Here we show that this natural ``barrier'' is attainable.

\begin{theorem}\label{thm:main}
Let $M_n$ be drawn uniformly from all $n \times n$ symmetric matrices with entries in $\{-1, 1\}$. Then for $c = 2^{-13}$ and $n$ sufficiently large
\[ \PP( \det(M_n) = 0 ) \leq \exp\left( -c\sqrt{n\log n}\right).\]
\end{theorem}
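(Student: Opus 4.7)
The plan is to bound, for $c = 2^{-13}$ and $n$ large, the probability that some unit vector $v \in S^{n-1}$ satisfies $M_n v = 0$, by partitioning $S^{n-1}$ into subclasses and treating each separately. First I would dispatch the ``compressible'' vectors (those well-approximated by $\delta n$-sparse vectors) by the now-standard combination of a cheap net together with tail bounds on the operator norm of small submatrices of $M_n$; this contribution is at most $\exp(-c_1 n)$, far smaller than the target. The real game is played on the ``incompressible'' vectors, on which one has strong control of individual coordinates and, in particular, a lower bound $|v_j| \geq c/\sqrt n$ on a linear fraction of the coordinates.

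The central innovation needed to push past the $\exp(-c\sqrt n)$ barrier of \cite{CMMM} is to harvest roughly $\sqrt{n\log n}$ \emph{independent} rows out of the symmetric matrix. Concretely, I would fix $k = \lceil \sqrt{n\log n}\rceil$ and, after revealing the entries of $M_n$ indexed by $\{(i,j): i,j\in[n]\setminus[k]\}\cup\{(i,j):i,j\in[k]\}$, observe that the remaining $k(n-k)$ entries $\{M_{ij}: i\in[k],\ j\in[n]\setminus[k]\}$ are mutually independent $\pm 1$ variables, since symmetry forces no pair of them to coincide. For a fixed test vector $v$, the top $k$ coordinates of $M_n v$ then split, conditionally on the revealed entries, into $k$ independent shifted sums $\sum_{j\in[n]\setminus[k]} M_{ij} v_j + c_i(v)$, so small-ball probabilities multiply cleanly across the $k$ rows. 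I would then split incompressible $v$ according to whether the restriction $v|_{[n]\setminus[k]}$ has arithmetic structure, measured by a suitable notion of least common denominator (LCD).

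For \emph{unstructured} $v$ (large LCD), an inverse Littlewood-Offord estimate gives per-row small-ball probability at most $\rho = O(n^{-1/2})$, and independence across the $k$ harvested rows yields a joint small-ball bound at most $\rho^k = \exp\!\bigl(-\Omega(\sqrt{n\log n}\cdot \log n)\bigr)$; a standard net on the unstructured class then finishes this case with margin to spare. For \emph{structured} $v$, I would build an explicit net of size $\exp(C\sqrt{n\log n})$ at an appropriate scale using the LCD-based parametrization of structured vectors, and again use the product of $k$ independent per-row small-ball estimates to outpace the net. The principal obstacle is the usual delicate balancing: the threshold separating ``structured'' from ``unstructured'' must be chosen so that the structured net is not too large while unstructured vectors genuinely enjoy the $n^{-1/2}$-type per-row bound, and one must verify that conditioning on the hidden $k\times k$ diagonal block and on the $(n-k)\times(n-k)$ complementary block does not secretly destroy either the independence or the anti-concentration. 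Getting those thresholds exactly right is precisely where the rate $\sqrt{n\log n}$ is determined, and I expect this calibration, rather than any single technical estimate, to be the heart of the proof.
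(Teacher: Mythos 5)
Your proposal takes the Rudelson--Vershynin/Vershynin route (real unit sphere, compressible/incompressible split, LCD-based structured/unstructured dichotomy, nets), whereas the paper works entirely over $\Z_p$ for a prime $p \approx \exp(c\sqrt{n\log n})$ and never touches LCDs or $\varepsilon$-nets on the sphere. The paper reduces (via Lemma~\ref{lem:CMMM}) to bounding $\sum_{v:\rho(v)\geq \beta}\max_w\P(Mv=w)$, and controls this sum by a counting argument built around a new combinatorial inverse Littlewood--Offord theorem (Theorem~\ref{thm:rough-ILwO}): every $v$ with $\rho(v)$ large has a short ``fingerprint'' subvector $w=v_T$ with $|T|\lesssim \sqrt{n\log n}$ such that almost all coordinates $v_i$ lie in a small set $N_\mu(w)$ of size only $256 k^{-1/5}/\rho_\mu(v)$. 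The $k^{-1/5}$ gain, extracted by iterating a greedy algorithm (Lemmas~\ref{lem:initialcont-lem}, \ref{lem:container-lem}) together with a Hal\'asz-type Fourier estimate (Lemma~\ref{cor:relwin}), is precisely what the union bound over fingerprints needs to close.

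Beyond being a different route, your sketch has a genuine gap: you do not identify the new technical ingredient that breaks the $\exp(-c\sqrt n)$ barrier of \cite{CMMM}, and the ingredient you do name --- ``harvesting $\sqrt{n\log n}$ independent rows'' from the off-diagonal block --- is not new. It is exactly the decomposition used in \cite{ferber-jain,CMMM} and in this paper (see the passage around \eqref{eq:P-bound}, where revealing rows indexed by $S^c$ and then $S\setminus T'$ yields a product of per-row small-ball probabilities). What was missing in \cite{CMMM}, and is missing from your plan, is a way to make the structured union bound cheaper; you defer this to ``calibration'' of the LCD threshold, but the barrier argument in \cite[Section 2.2]{CMMM} shows that no calibration alone can do it --- one needs a strictly stronger inverse theorem or net bound. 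It is worth noting that Jain, Sah and Sawhney~\cite{jainnew} carried out essentially the program you describe (geometric, LCD-based, nets, independent rows from symmetry) and landed at $\exp(-cn^{1/2}(\log n)^{1/4})$, strictly weaker than the paper's $\exp(-c\sqrt{n\log n})$; this is evidence that the LCD route needs an additional idea beyond your sketch to reach the stated rate. Finally, a terminological slip: the per-row bound $\rho=O(n^{-1/2})$ for incompressible vectors is a \emph{forward} Littlewood--Offord (Erd\H{o}s) estimate, not an inverse one; inverse Littlewood--Offord is what you need to describe and count the structured vectors, and that is the part your proposal leaves unspecified.
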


Indeed, our proof of Theorem~\ref{thm:main} follows the shape of \cite{CMMM,ferber-jain,vershynin-invertibility} and improves upon these results primarily by proving 
an improved and considerably simpler ``rough'' inverse Littlewood-Offord theorem. This theorem parallels Theorem 2.1 in \cite{CMMM}. 

To state this, we need a few notions. For a vector $v\in \Z_p^n$ and $\mu\in [0,1]$,
we define the random variable $X_\mu(v) := \eps_1v_1+\cdots+\eps_n v_n$, where $\eps_i \in \{-1,0,1\}$ are i.i.d.\ and $\PP(\eps_i = 1) = \PP(\eps_i = - 1) = \mu/2$.
Also define $\rho_{\mu}(v) = \max_x \P( X_{\mu}(v) = x)$ and\footnote{We will also write $\rho_1(v) = \rho(v)$.} let $|v|$ denote the number of non-zero entries of $v$.
Finally for $T\subseteq [n]$, let $v_T  := (v_i)_{i \in T}$.

We now introduce a simple concept that is key to our rough inverse Littlewood-Offord theorem. For a vector $w = (w_1,\ldots,w_d)$ we define the
\emph{neighbourhood} of $w$ (relative to $\mu$) as 
\begin{align}\label{eq:Ndef}
N_{\mu}(w)  := \{ x \in \Z_p : \PP(X_\mu(w) = x) > 2^{-1}\PP(X_\mu(w) = 0) \}, \end{align}
which is the set of places where our random walk is ``likely'' to terminate, relative to $0$.

\begin{theorem}\label{thm:rough-ILwO}
Let $\mu \in (0,1/4]$, $k, n \in \mathbb{N}$, $p$ prime and $v \in \Z_p^n$.
Set $d=\frac{2}{\mu}\log \rho_\mu(v)^{-1}$, suppose that $|v|\geq kd$ and $\rho_\mu(v)\geq \frac{2}{p}$.
Then there exists $T\subseteq [n]$ with $|T|\leq d$ so that if we set $w=v_T$
then
$v_i \in N_{\mu}(w)$ for all but at most $kd$ values of $i\in [n]$ and 
\[
|N_\mu(w)|\leq \frac{256}{k^{1/5}} \cdot \frac{1}{\rho_{\mu}(v)}\, .
\]
\end{theorem}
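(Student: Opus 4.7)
The plan is to construct $T$ by a simple greedy procedure and then bound $|N_\mu(v_T)|$ through the atom probability $\P(X_\mu(v_T) = 0)$.

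Initialize $T_0 = \emptyset$. At step $t$, let $B_t = \{i \in [n] : v_i \notin N_\mu(v_{T_t})\}$. If $|B_t| \leq kd$, stop and output $T = T_t$; otherwise choose any $i_t \in B_t$ and set $T_{t+1} = T_t \cup \{i_t\}$. The first key observation is a \emph{decay step}: conditioning on $\eps_{i_t}$ yields
\begin{equation*}
\P(X_\mu(v_{T_{t+1}}) = 0) = (1-\mu)\P(X_\mu(v_{T_t}) = 0) + \tfrac{\mu}{2}\bigl[\P(X_\mu(v_{T_t}) = v_{i_t}) + \P(X_\mu(v_{T_t}) = -v_{i_t})\bigr].
\end{equation*}
By symmetry of $X_\mu(v_{T_t})$ about $0$ the two bracketed terms are equal, and since $i_t \in B_t$ each is at most $\tfrac{1}{2}\P(X_\mu(v_{T_t}) = 0)$. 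So every bad addition multiplies $\P(X_\mu(v_T) = 0)$ by at most $1 - \mu/2$.

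The second key observation is a \emph{Fourier monotonicity} fact. For $\mu \leq 1/4$ the characteristic-function factor $\phi(t) = 1 - \mu + \mu\cos(2\pi t/p)$ lies in $[0,1]$, hence for any $T \subseteq [n]$,
\begin{equation*}
\P(X_\mu(v_T) = 0) = \frac{1}{p}\sum_{\xi \in \Z_p}\prod_{j \in T}\phi(v_j\xi) \geq \frac{1}{p}\sum_{\xi \in \Z_p}\prod_{j \in [n]}\phi(v_j\xi) = \P(X_\mu(v) = 0) = \rho_\mu(v),
\end{equation*}
the last equality using $\phi \geq 0$ to place the mode of $X_\mu(v)$ at the origin. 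Combining the two observations, after $t$ bad additions $\rho_\mu(v) \leq \P(X_\mu(v_{T_t}) = 0) \leq (1-\mu/2)^t$, whence $t \leq \tfrac{2}{\mu}\log\rho_\mu(v)^{-1} = d$. Thus the greedy halts within $d$ steps, yielding $T$ with $|T| \leq d$ and $|\{i : v_i \notin N_\mu(v_T)\}| \leq kd$.

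A Markov-type count gives $|N_\mu(v_T)| \leq 2/\P(X_\mu(v_T) = 0) \leq 2/\rho_\mu(v)$, which already matches the stated bound $256/(k^{1/5}\rho_\mu(v))$ whenever $k \leq 128^5$. The main obstacle is the regime $k > 128^5$, in which one must upgrade to $\P(X_\mu(v_T) = 0) \geq k^{1/5}\rho_\mu(v)/128$; the greedy termination alone is insufficient, and the hypothesis $|v| \geq kd$ must be exploited more substantially. A natural route is to combine a Plancherel-type identity such as $\rho_\mu(v)^2 \leq \P(X_\mu(v_T) = 0)\cdot \rho_\mu(v_{T^c})$ with an Erd\H{o}s--Littlewood--Offord style upper bound on $\rho_\mu(v_{T^c})$, leveraging that $v_{T^c}$ has support at least $(k-1)d$; alternatively one may halt the greedy early once the atom probability drops below the desired threshold and argue directly that $|B_t|$ must have contracted. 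Extracting the sharp $k^{1/5}$ exponent from this structure/spread tradeoff is where I expect the main difficulty to lie.
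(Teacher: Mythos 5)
Your greedy construction and the termination bound $|T|\le d$ are correct, and they match the first part of the paper's argument (Lemma~\ref{lem:initialcont-lem} there uses essentially the same decay step and the same monotonicity $\rho_\mu(v_T)\ge\rho_\mu(v)$). But, as you yourself note, this only yields $|N_\mu(v_T)|\le 2/\rho_\mu(v)$, which is missing the crucial factor $k^{-1/5}$, so the proof is genuinely incomplete.

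Moreover, the route you sketch to close the gap does not work. From $\rho_\mu(v)^2\le\rho_\mu(v_T)\rho_\mu(v_{T^c})$ one would need $\rho_\mu(v_{T^c})\lesssim k^{-1/5}\rho_\mu(v)$ to conclude $\rho_\mu(v_T)\gtrsim k^{1/5}\rho_\mu(v)$, but an Erd\H{o}s--Littlewood--Offord bound on $v_{T^c}$ only gives
\[
\rho_\mu(v_{T^c})\lesssim \frac{1}{\sqrt{\mu(k-1)d}}+\frac1p=\frac{1}{\sqrt{2(k-1)\log\rho_\mu(v)^{-1}}}+\frac1p\,,
\]
which decays only \emph{logarithmically} in $\rho_\mu(v)^{-1}$. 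When $\rho_\mu(v)$ is small (which is exactly the regime needed in the application, where $\rho_\mu(v)$ can be as small as $\Theta(n/p)$ with $p\approx e^{c\sqrt{n\log n}}$) the required inequality $\rho_\mu(v_{T^c})\lesssim k^{-1/5}\rho_\mu(v)$ fails badly. Plancherel/Cauchy--Schwarz plus an ELO bound therefore cannot recover the $k^{1/5}$ gain.

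The paper closes this gap with two ideas your plan lacks. First, instead of running the greedy once, it runs it $k$ times on nested residual supports to produce disjoint $T_1,\dots,T_k$ with $S=\bigcup_j T_j$ (so $|S|\le kd$), and uses the H\"older-type inequality $\rho_\mu(v_{T_1}\cdots v_{T_k})\le\max_j\rho_\mu(v_{T_j}^k)$ to get $\rho_\mu(v_S)\le\rho_\mu(v_T^k)$ for the maximizing $T=T_j$ (Lemma~\ref{lem:container-lem}). Second, a Hal\'asz-type Fourier lemma (Lemma~\ref{cor:relwin}), proved via Cauchy--Davenport on sumsets of level sets of the characteristic function, shows $\rho_\mu(v_T^k)\le 64k^{-1/5}\rho_\mu(v_T)+p^{-1}$. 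Chaining these with $\rho_\mu(v)\le\rho_\mu(v_S)$ and $\rho_\mu(v)\ge 2/p$ gives $\rho_\mu(v_T)\ge k^{1/5}\rho_\mu(v)/128$, which is exactly the lower bound missing from your argument. The key novelty is passing through the $k$-fold repeated vector $v_T^k$, not the split into $v_T$ and $v_{T^c}$.
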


In addition to controlling the number of such vectors $v$ with prescribed $\rho(v)$, 
Theorem~\ref{thm:rough-ILwO} gives some further information on the structure of 
the sets that the $v_i$ belong to, which makes for a simplified application of Theorem~\ref{thm:rough-ILwO} in the proof of Theorem~\ref{thm:main}. 

The proof of Theorem~\ref{thm:rough-ILwO} has two parts. The first can be found in Section~\ref{sec:fourier} and uses Fourier analysis in the style of Hal\'asz \cite{halasz}, whose influential techniques pervade the literature. The second is a novel (and simple) iterative application
of a greedy algorithm. This can be found in Section~\ref{sec:proof} along with the proof of Theorem~\ref{thm:rough-ILwO}.

In what follows we discuss the proof of Theorem~\ref{thm:main}. In addition to illustrating the method of \cite{ferber-jain, CMMM} in a little more detail, we hope the reader will get some feeling for why Theorem~\ref{thm:rough-ILwO} is so integral to the problem.

\subsection{Discussion of proof}

The event `$M_n$ is singular' can, somewhat daftly, be expressed as  $\bigcup_{v \in \R^n \setminus \{0\}} \{ Mv = 0  \}$.
To reduce the size of this unwieldy union, we notice that it is sufficient to consider all non-zero $v \in \Z^n$  and then reduce modulo $p$, for a prime $p \approx \exp\big(c(n\log n)^{1/2}\big)$. Since the probability that $Mv$ is zero is certainly bounded by the probability $Mv$ is zero modulo $p$, it is enough for us
to upper bound the probability of the event $\bigcup_{v \in \Z_p^n \setminus \{0\}} \{ Mv = 0  \}$, where all operations are taken over the field of $p$ elements.

Having reduced our event to a union of a finite number of sets, it is temping to greedily apply the union bound to the events $\{ Mv = 0  \}$, for non-zero $v \in \Z_p^n$. Unfortunately in our case, a small wrinkle arises with vectors for which $\rho(v) \approx 1/p$;
that is, very close to the ``mixing'' threshold. To get around this, we again follow \cite{ferber-jain, CMMM} and use a lemma that allows us to safely exclude all $v$ with
$\rho(v) < cn/p$ from our union bound, at the cost of working with a slightly different event which, in practice, adds little difficulty to our task.

\begin{lemma}\label{lem:redux}
Let $c=1/800,\, n\in \N$ sufficiently large and
$p\leq \exp(c\sqrt{n\log n})$ be a prime.  If for all $\beta = \Theta(n/p)$ we have
\begin{equation} \label{eq:union} \sum_{v : \rho(v) \geq \beta} \max_{w \in \Z_p^n} \PP(Mv = w)  \leq e^{-cn}\end{equation}
then for $c' = 2^{-13}$
\[ \PP\big( \det(M_n) = 0 \big) \leq \exp(-c' \sqrt{n\log n}).\]
\end{lemma}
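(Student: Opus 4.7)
The plan is to reduce the event $\{\det M_n = 0\}$ modulo the prime $p$, partition candidate null vectors by $\rho$-value, apply the hypothesis to the structured class, and bound the unstructured class by a direct Fourier estimate on $\P(M_n v \equiv 0)$. Since any integer null vector of $M_n$ descends to a nonzero element of $\ker(M_n \bmod p)$, one has
\[
\{\det M_n = 0\} \subseteq \{\exists v \in \Z_p^n \setminus \{0\} : M_n v \equiv 0 \bmod p\},
\]
and choosing one representative per $\Z_p^*$-line gives a set $V \subset \Z_p^n$ with $|V| \leq p^{n-1}$ to union-bound over. Fix $\beta := c_0 n/p$ for a suitable constant $c_0$ so that the hypothesis \eqref{eq:union} is available for this $\beta$, and split $V = V_1 \sqcup V_2$ by whether $\rho(v) \geq \beta$.

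The structured piece $V_1$ is handled directly by the hypothesis with $w = 0$:
\[
\P(\exists v \in V_1 : M_n v \equiv 0) \leq \sum_{v : \rho(v) \geq \beta} \max_{w \in \Z_p^n} \P(M_n v = w) \leq e^{-cn},
\]
which is much smaller than the target $\exp(-c' \sqrt{n \log n})$ since $n \gg \sqrt{n \log n}$ for large $n$.

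For the unstructured piece $V_2$ I would control $\P(M_n v \equiv 0)$ individually via Fourier inversion on $\Z_p^n$, exploiting the independence of the entries $\{M_{ij}\}_{i \leq j}$:
\[
\P(M_n v \equiv 0 \bmod p) = p^{-n} \sum_{t \in \Z_p^n} \prod_{i < j} \cos\!\Big(\tfrac{2\pi(t_i v_j + t_j v_i)}{p}\Big) \prod_i \cos\!\Big(\tfrac{2\pi t_i v_i}{p}\Big).
\]
For $v$ with $\rho(v) < \beta = \Theta(n/p)$, the mixedness of $v$ forces the $t \neq 0$ contribution to be $O(1)$, giving $\P(M_n v \equiv 0) \leq C p^{-n}$ for some absolute constant $C$. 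Union-bounding over $|V_2| \leq p^{n-1}$ then gives
\[
\P(\exists v \in V_2 : M_n v \equiv 0) \leq C/p \leq \exp(-c' \sqrt{n \log n})
\]
provided $p$ is chosen close to the upper bound $\exp(c\sqrt{n\log n})$, with $c' < c$. Adding the $V_1$ and $V_2$ bounds completes the proof.

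The main obstacle is the Fourier estimate for $V_2$: one must show that $\sum_{t \neq 0} \big|\prod \cos\big| = O(1)$ uniformly for $v$ with $\rho(v) < \beta$, despite the symmetric intertwining of the coefficients $t_i v_j + t_j v_i$. This requires Halász-type arguments in the spirit of those developed in Section~\ref{sec:fourier} for the proof of Theorem~\ref{thm:rough-ILwO}.
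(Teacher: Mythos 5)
Your handling of the structured class $V_1$ matches the paper: the hypothesis \eqref{eq:union} is applied directly via a union bound, yielding a contribution of $e^{-cn}$, which is negligible compared to the target $\exp(-c'\sqrt{n\log n})$. The divergence, and the gap, is in your treatment of $V_2$.

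The claim that $\P(M_n v \equiv 0 \bmod p) \leq C p^{-n}$ uniformly over $v$ with $\rho(v) < \beta$ is not available, and obtaining anything like it is precisely the hard part of the symmetric singularity problem. Your Fourier expansion is correct as far as it goes, but the coupling $t_i v_j + t_j v_i$ in the off-diagonal terms means the sum over $t$ does not factor, and the Hal\'asz-type machinery of Section~\ref{sec:fourier} (which bounds $\rho_\mu(v)$ for the \emph{linear} walk $\sum_i \eps_i v_i$) does not apply to this quadratic expression. Indeed, if a direct bound of the form $C p^{-n}$ for unstructured $v$ were accessible, it would immediately give $\P(\det M_n = 0) \leq e^{-\Omega(n)}$ and essentially resolve the problem; the obstruction is exactly that conditioning or decoupling must be used to break the symmetry constraint, and every known way of doing so loses a substantial factor.

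The paper avoids this entirely by citing Lemma~\ref{lem:CMMM} (Lemma 2.1 of \cite{CMMM}), which packages the Costello--Tao--Vu / Ferber--Jain decoupling argument: one conditions on a principal submatrix, reduces the singularity event to a family of linear Littlewood--Offord problems along rows, and pays a price quantified by the $\beta^{1/8}$ term in the statement $\P(\det M_n = 0) \leq n\sum_m (\beta^{1/8} + q_m(\beta)/\beta)$. With $\beta = \Theta(n/p)$ and $p \approx \exp(c\sqrt{n\log n})$, the term $n \cdot n \cdot \beta^{1/8}$ is roughly $\exp(-c\sqrt{n\log n}/8 + O(\log n))$, which is the dominant contribution and fixes the exponent $c' = 2^{-13}$. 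This is a genuinely different (and much weaker, but achievable) bound on the unstructured part than the $O(1/p)$ you were aiming for. Your proposal would need to either prove the quadratic Fourier bound from scratch, which would be a major new result, or import a decoupling lemma of the Lemma~\ref{lem:CMMM} type, which is what the paper does.
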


This is essentially Lemma 2.1 in \cite{CMMM}, but is implicit in the earlier work of \cite{ferber-jain} who proved this using the earlier ideas of \cite{costello-tao-vu}. We provide the short formal derivation of Lemma~\ref{lem:redux}
from Lemma 2.1 of \cite{CMMM} in Section~\ref{sec:proof-of-main}.

With Lemma~\ref{lem:redux} in hand, our task is now clear:  we need to bound the sum on the left hand side of \eqref{eq:union}. 
To do this, we invoke our inverse Littlewood-Offord result (Theorem~\ref{thm:rough-ILwO}, in the form of Lemmas \ref{lem:initialcont-lem} and \ref{lem:container-lem}) and prove the following. 

\begin{theorem}\label{thm:qnbeta}
Let $c=1/800,\,  n\in \N$ sufficiently large and
$p\leq \exp(c\sqrt{n\log n})$ prime.  Then for  $\beta = \Theta(n/p)$ we have
$$ \sum_{v : \rho(v) \geq \beta} \max_{w \in \Z_p^n} \PP(Mv = w)    \leq e^{-cn}\,.$$
\end{theorem}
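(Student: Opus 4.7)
The plan is to dyadically decompose the sum by the magnitude of $\rho(v)$ and, at each level, combine a container count from Theorem~\ref{thm:rough-ILwO} with a row-by-row bound on $\max_w \P(Mv = w)$. Write
\[
\{v : \rho(v) \geq \beta\} = \bigsqcup_{j=0}^{J} V_j, \qquad V_j := \{v \in \Z_p^n : 2^{-j-1} < \rho(v) \leq 2^{-j}\},
\]
with $J = \lceil \log_2(1/\beta)\rceil = O(\sqrt{n\log n}) = o(n)$, since $\beta \geq n/p$ and $p \leq \exp(c\sqrt{n\log n})$. It therefore suffices to bound each $\sum_{v\in V_j}\max_w\P(Mv=w)$ by $e^{-2cn}$ and sum over the $J$ levels.

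For a fixed $v \in V_j$, I would bound $\max_w\P(Mv=w)$ by exposing $M$ row by row. The new randomness in row $i$ consists of $\{M_{i,j}\}_{j\ge i}$, independent of the previously exposed rows, so by the definition of $\rho_1$,
\[
\P\bigl((Mv)_i = w_i \,\big|\, \text{rows } 1,\ldots,i-1\bigr) \leq \rho_1(v_{[i,n]}),
\]
and hence $\max_w\P(Mv=w) \leq \prod_{i=1}^n \rho_1(v_{[i,n]})$. Using the container description of $v$ provided by Lemmas~\ref{lem:initialcont-lem} and~\ref{lem:container-lem} (both consequences of Theorem~\ref{thm:rough-ILwO}), most coordinates of $v$ lie in a small set $N_\mu(w)$ of size $O(2^j/k^{1/5})$, so $\rho_1(v_{[i,n]}) \leq C\cdot 2^{-j}\sqrt{n/(n-i+1)}$ once $n-i+1 \geq C'\log p$. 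A Stirling-type calculation on the product then yields $\max_w\P(Mv=w) \leq 2^{-jn} \cdot e^{O(n)}$.

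For the count, Theorem~\ref{thm:rough-ILwO} (with $\mu = 1/4$ and parameter $k$ to be tuned) describes each $v \in V_j$ by: a set $T \subseteq [n]$ with $|T| \leq d := 8(j+1)$, a template $w = v_T \in \Z_p^{|T|}$, an exceptional set $E$ with $|E| \leq kd$, arbitrary $v_E \in \Z_p^{|E|}$, and $v_i \in N_\mu(w)$ for $i \in [n]\setminus(T\cup E)$. Enumerating all such choices,
\[
|V_j| \leq \binom{n}{d}\,p^d\, \binom{n}{kd}\,p^{kd}\left(\frac{256 \cdot 2^{j+1}}{k^{1/5}}\right)^{n}.
\]
Multiplying by the probability bound, the factors $2^{jn}$ and $2^{-jn}$ cancel, giving
\[
\sum_{v \in V_j}\max_w\P(Mv=w) \leq \left(\frac{512}{k^{1/5}}\right)^{n} e^{O(n)} \cdot \binom{n}{d}\,p^d\,\binom{n}{kd}\,p^{kd}.
\]
Taking $k = k(n)$ large enough to force $(512/k^{1/5})^n$ to dominate the overhead $\binom{n}{kd}p^{kd} \leq \exp(O(kd(\log n + \log p)))$ — recalling $d = O(\log(1/\rho_j))$ and using $\log p \leq c\sqrt{n\log n}$ with $c = 1/800$ small enough — yields the required per-level bound of $e^{-2cn}$, and hence the theorem.

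The chief obstacle is the probability bound of the second paragraph: proving $\rho_1(v_{[i,n]}) \leq C\cdot \rho_1(v)\sqrt{n/(n-i+1)}$ uniformly over $v$ in each container. This amounts to showing that the anti-concentration of $v$ is stable under removing a small fraction of coordinates, and it is the structural information in the container — in particular that many coordinates of $v$ lie in the small set $N_\mu(w)$ — that precisely enables this. Without this stability one is stuck with the $\rho_1(v)^{n/2}$ bound obtained from a halving argument for symmetric matrices, which does not suffice to overcome the $(1/\rho_1(v))^{n}$ container count.
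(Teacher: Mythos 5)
There are two genuine gaps here, and together they make the argument unsalvageable in its current form.

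\textbf{Gap 1: the probability bound.}
You propose to show $\rho_1(v_{[i,n]}) \leq C\,\rho_1(v)\,\sqrt{n/(n-i+1)}$ from the container structure. This is false, even for $v$ lying in your containers. Take $v_i = 1$ for $i \leq n/2$ and $v_i = K$ for $i > n/2$, with $K$ chosen so that $\rho_1(v) \asymp 1/n$ (the two halves anticoncentrate on different ``scales''). Here the coordinates lie in a small arithmetically structured set (so $v$ is entirely plausible as a container element), yet $\rho_1(v_{[n/2,n]}) \asymp n^{-1/2}$, which is $\sqrt{n}$ times larger than the claimed $C\rho_1(v)\sqrt{2}\asymp n^{-1}$. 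In short, the containment $v_i \in N_\mu(w)$ controls the \emph{size} of the set of admissible values but says nothing about the spread of $v$ across scales; it cannot by itself give suffix-anticoncentration. You acknowledge this as the ``chief obstacle,'' but it is not merely an obstacle --- the desired inequality is false. The paper sidesteps this entirely: it does not try to prove $\max_w\P(Mv=w)\lesssim \rho(v)^n$. Instead it reveals the off-diagonal block $M_{S^c\times S}$ to get $\max_w\P(Mv=w)\le\rho(v_S)^{n-|S|}\cdot\rho(v_{T'})^{|S|-|T'|}$, where $\rho(v_S)$ may be much \emph{larger} than $\rho(v)$, and then closes the resulting gap against the container count using the Fourier inequality $\rho_\mu(v_S)\le\rho_\mu(v_T^k)\le 64k^{-1/5}\rho_\mu(v_T)+p^{-1}$. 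The probability exponent is $n-|S|=n(1-o(1))$, obtained by a single block reveal rather than a row-by-row exposure.

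\textbf{Gap 2: the container count.}
You charge $p^{kd}$ for the exceptional coordinates $v_E$. At the top dyadic level $j\asymp\log_2(1/\beta)$ you have $d\asymp\log p\asymp c\sqrt{n\log n}$, so $p^{kd}=\exp\bigl(kd\log p\bigr)\ge\exp\bigl(\Omega(k c^2 n\log n)\bigr)$. The gain from the containers is $(512/k^{1/5})^n=\exp\bigl(-\tfrac{n}{5}\log k+O(n)\bigr)$. For these to balance you would need $\log k\gtrsim k c^2\log n$, which fails for every $k\ge2$ once $n$ is large; there is no choice of $k=k(n)$ that works. The paper avoids this by running the container lemma a second time on the restriction $v_S$ itself (producing $T'$ and $w_2=v_{T'}$), so that the coordinates in $S\setminus T'$ are not free in $\Z_p$ but constrained to $N(w_2)$; only the short template $v_{T'}$ is recorded exactly. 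The loss is then $p^{O(d)}$, not $p^{O(kd)}$, and this is affordable when $c\le\mu/200$. Without this second application of the container lemma, the enumeration of the exceptional set overwhelms the savings.

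Finally, as a smaller point: even under your (false) suffix bound you'd have to bound the last $O(\log p)$ rows by $1$, costing a factor $\rho(v)^{-O(\log p)}\approx(p/n)^{O(\log p)}=\exp\bigl(O((\log p)^2)\bigr)=\exp\bigl(O(c^2 n\log n)\bigr)$, which would itself overwhelm the target $e^{-cn}$ unless tracked carefully. The paper's bound does not leave any rows ``unpaid'' in this manner because the exponent $n-|S|$ already covers all but $o(n)$ rows and the base is matched against the count by the Fourier lemma.
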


\noindent\textbf{Remark:} Simultaneously to our work, Jain, Sah and Sawhney~\cite{jainnew} obtained an upper bound on the singularity probability of the form $\exp(-c n^{1/2}(\log n)^{1/4})$ and a bound on the lower tail of
the least singular value for symmetric random matrices with subgaussian entries.

\section{Proof of Theorem~\ref{thm:rough-ILwO}}\label{sec:proof}
In this section we prove Theorem~\ref{thm:rough-ILwO} modulo a key Fourier lemma which we postpone to Section~\ref{sec:fourier}.
To go further, we introduce a little notation. Let $\Z_p^{\ast}$ denote the set of all vectors of finite dimension with  entries in $\Z_p$. For $v = (v_1,\ldots, v_k) ,w = (w_1,\ldots, w_l) \in \Z^{\ast}_p$, let $vw :=(v_1,\ldots, v_k,w_1,\ldots, w_l)$ denote the \emph{concatenation} of $v$ and $w$ and let $v^k$ denote the concatenation of $k$ copies of $v$. 
For $v\in \Z_p^n$ and $T\subseteq [n]$, let $v_T  := (v_i)_{i \in T}$ and say that $w$ is a \emph{subvector} of $v$ if $w=v_T$ for some $T\subseteq [n]$.  
We also define $|v|$ to the be size of the \emph{support} of $v$,  the number of non-zero coordinates.

Unless specified otherwise, take $\mu=1/4$ for definiteness. We recall the key definition introduced in \eqref{eq:Ndef}.
For $w\in \Z_p^{\ast}$, we define the \emph{neighbourhood} of $w$ as
\[ N(w)  := \{ x \in \Z_p : \PP(X_\mu(w) = x) > 2^{-1}\PP(X_\mu(w) = 0) \}. \]

This is motivated by the fact that for $\mu \in [0,1/2]$, the walk $X_{\mu}$ is most likely to be found at $0$ (see e.g.\  \cite[Corollary 7.12]{tao-vu-book}),
\begin{equation} \label{eq:lazy-loves-home} \rho_\mu(w)= \PP(X_\mu(w)=0) \,  .\end{equation}
Hence, we may think of $N(w)$ as the set of all values of the random walk $X_\mu(w)$, which are at least half as likely as the most likely value. 
We can also easily control the size of $N(w)$. Indeed,
\[
1\geq \sum_{x\in N(w)} \PP(X_\mu(w)= x)> \frac{1}{2}|N(w)|\cdot\PP(X_\mu(w) = 0)=\frac{1}{2}|N(w)|\rho_\mu(w)
\]
and so 
\begin{equation}\label{eq:Lw}
|N(w)|\leq \frac{2}{\rho_\mu(w)}\, .
\end{equation}

We now turn to our greedy algorithm which, given a vector $v\in \Z_p^\ast$, returns a short subvector
$w$ of $v$ such that each coordinate of $v$ is contained in $N(w)$.
The following simple lemma can be interpreted as an inverse Littlewood-Offord result in its own right,
and is \emph{almost} as good as Theorem~\ref{thm:rough-ILwO}, however it only gives a bound of $|N(w)| \leq 1/\rho_{\mu}(w) \leq 1/\rho_{\mu}(v)$, which is
lacking the crucial factor of $k^{-1/5}$. For this lemma we use the monotonicity of $\rho_{\mu}$ \cite[Corollary 7.12]{tao-vu-book}: if 
$w, v\in \Z^{\ast}_p$ where $w$ is a subvector of $v$, then
\begin{equation} \label{eq:monotone} \rho(v)\leq \rho_\mu(w) \, .\end{equation}

\begin{lemma}\label{lem:initialcont-lem}
For $\mu\in (0,1/4]$ and $n\in \N$, let $v\in \Z_p^n$. Then there exists $T\subseteq [n]$,  
such that $v_i\in N(v_T)$ for all $i\notin T$, $\rho_\mu(v_T)\leq (1-\mu/2)^{|T|}$ and so 
\[|T|\leq \frac{2}{\mu}\log \frac{1}{\rho_{\mu}(v)}.\]
\end{lemma}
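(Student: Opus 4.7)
The plan is to build $T$ by a simple greedy algorithm: initialize $T=\emptyset$, and while there exists an index $i\in [n]\setminus T$ with $v_i\notin N(v_T)$, add one such $i$ to $T$. When the process terminates, the first required conclusion --- $v_i\in N(v_T)$ for every $i\notin T$ --- holds by the stopping condition. The other two conclusions will follow once I establish the key invariant that each iteration of the algorithm multiplies $\rho_\mu(v_T)$ by a factor of at most $1-\mu/2$.

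To verify this invariant, let $w=v_T$ denote the subvector before a given iteration and let $w'=(w,v_i)$ denote the subvector after appending the chosen coordinate. Conditioning on the random sign $\eps\in\{-1,0,1\}$ attached to $v_i$ and using that $X_\mu(w)$ is symmetric, so $\P(X_\mu(w)=v_i)=\P(X_\mu(w)=-v_i)$, I get
\[
\P(X_\mu(w')=0)=(1-\mu)\,\P(X_\mu(w)=0)+\mu\,\P(X_\mu(w)=v_i).
\]
Because $\mu\leq 1/4<1/2$, the identity \eqref{eq:lazy-loves-home} lets me replace the two ``$\P(\cdot=0)$'' terms by $\rho_\mu(w')$ and $\rho_\mu(w)$ respectively. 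The selection rule $v_i\notin N(w)$ directly gives $\P(X_\mu(w)=v_i)\leq \tfrac{1}{2}\rho_\mu(w)$, and combining these two facts yields $\rho_\mu(w')\leq (1-\mu/2)\rho_\mu(w)$, as claimed.

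Iterating this bound from the base case $\rho_\mu(v_\emptyset)=1$ produces the second stated inequality $\rho_\mu(v_T)\leq(1-\mu/2)^{|T|}$. For the last inequality, monotonicity \eqref{eq:monotone} applied to the subvector relation $v_T\preceq v$ gives $\rho_\mu(v)\leq \rho_\mu(v_T)\leq (1-\mu/2)^{|T|}\leq e^{-\mu|T|/2}$, which rearranges to $|T|\leq \frac{2}{\mu}\log\rho_\mu(v)^{-1}$; in particular the algorithm must terminate before $|T|$ exceeds this threshold, since otherwise it would contradict monotonicity.

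I do not expect any serious obstacle: the entire argument is a one-shot greedy construction combined with the standard ``laziness'' inequality \eqref{eq:lazy-loves-home}. The only point requiring a little care is the reliance on $\rho_\mu(\cdot)$ being attained at $0$ both for $w$ and for $w'$, which is exactly where the assumption $\mu\leq 1/4$ (in fact $\mu\leq 1/2$ would suffice) enters the argument.
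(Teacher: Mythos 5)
Your proof is correct and takes essentially the same approach as the paper: a greedy algorithm that appends coordinates while tracking the decrease of $\rho_\mu$ via the conditioning identity $\rho_\mu(wv_i)=(1-\mu)\rho_\mu(w)+\mu\,\P(X_\mu(w)=v_i)$. The only cosmetic difference is that you phrase the selection rule directly as ``$v_i\notin N(v_T)$'' and start from $T=\emptyset$, whereas the paper selects $i$ by the equivalent condition $\rho_\mu(v_Tv_i)\leq(1-\mu/2)\rho_\mu(v_T)$ starting from $T_1=\{1\}$ and derives the neighbourhood condition from the termination criterion; the two are contrapositives of the same biconditional.
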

\begin{proof}
We build a sequence of sets $T_1, \ldots, T_d\subseteq [n]$ with $|T_i|=i$ via the following greedy process. Let $T_1=\{1\}$.
Given $T_t\subseteq [n]$ with $|T_t|=t$ for $t\geq 1$, 
let $v_{T_t}=(x_1,\ldots, x_t)$.
Pick $i\in [n]\backslash T_t$
such that 
\begin{align}\label{eq:stept}
\rho_\mu(x_1\ldots x_{t}v_i)\leq (1-\mu/2) \rho_\mu(x_1\ldots x_{t})\, .
\end{align}
If no such $i$ exists we terminate the process and set $T=T_t$.
Suppose this process runs for $d$ steps producing 
$T\subseteq [n]$ such that $v_T=(x_1, \ldots, x_d)$. 
By the termination condition, 
we have that for $i\in [n]\backslash T$
\[
\rho_\mu(x_1\ldots x_d v_i)> (1-\mu/2) \rho_\mu(x_1\ldots x_{d})\, .
\]
Conditioning on the coefficient of $v_i$ and 
using that $\PP(X_\mu(x_1\ldots x_d )= v_i)=\PP(X_\mu(x_1\ldots x_d) = -v_i)$
by symmetry, we can rewrite the left hand side to obtain
\[
\mu \PP(X_\mu(x_1\ldots x_d) = v_i) + (1-\mu)\rho_\mu(x_1\ldots x_d) > (1-\mu/2) \rho_\mu(x_1\ldots x_{d})\, .
\]
Rearranging shows $v_i \in N(v_T)$. 
For the bound on $d = |T|$, observe that by \eqref{eq:monotone},
inequality \eqref{eq:stept} and the fact that $\rho_\mu(x_1)=(1-\mu)$
we have
\[
\rho_\mu(v)\leq \rho_\mu(x_1\ldots x_d)\leq (1-\mu/2)^d\leq e^{-\mu d/2}\, .
\]
\end{proof}

The next lemma shows that we can improve Lemma~\ref{lem:initialcont-lem} by applying it iteratively. This will be key to regaining this crucial $k^{-1/5}$
in Theorem~\ref{thm:rough-ILwO}, and will ultimately give our $\sqrt{\log n}$ gain in the exponent of the singularity probability. In fact, if one only wanted to prove a bound of the form $\exp(-cn^{1/2})$ using our method, one needs only to use Lemma~\ref{lem:container-lem} along with a simplified Fourier argument. 

For this lemma we need the following property of $\rho_{\mu}$, which can be found in \cite[Corollary 7.12]{tao-vu-book}. Let $w_1,\ldots,w_k\in \Z_p^{\ast}$ and $\mu \in (0,1/2)$ then \begin{equation}\label{eq:holder}
\rho_\mu(w_1\cdots w_k) \leq \max_{ j \in [k] }\, \rho_\mu\left(w_j^k\right)\, .\end{equation}

\begin{lemma}\label{lem:container-lem}
Let $\mu\in (0,1/4]$, $n\in \N$
and $v \in \Z_p^n$. Set $d= \frac{2}{\mu}\log \rho_\mu(v)^{-1}$
and let $k \in \N$ be such that $kd\leq n$.
Then there exists $T\subseteq S\subseteq [n]$ with $|T|\leq d$, $|S| \leq kd$ such that $v_{i}\in N(v_T)$ for all $i\not \in S$ and $\rho_\mu(v_S)  \leq \rho_\mu( v_T^k )$. \end{lemma}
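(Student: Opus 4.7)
The plan is to obtain $T$ and $S$ by iteratively applying Lemma~\ref{lem:initialcont-lem} a total of $k$ times to successively smaller subvectors of $v$, and then to choose the final $T$ by means of the H\"older-type inequality \eqref{eq:holder}.

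First I would set $U_1 = [n]$ and apply Lemma~\ref{lem:initialcont-lem} to $v_{U_1} = v$ to obtain $T_1 \subseteq U_1$ with $|T_1| \leq d$ and with $v_i \in N(v_{T_1})$ for all $i \in U_1 \setminus T_1$. Inductively, for $2 \leq j \leq k$, let $U_j = [n] \setminus (T_1 \cup \cdots \cup T_{j-1})$, which is nonempty because $(k-1)d < kd \leq n$, and apply Lemma~\ref{lem:initialcont-lem} to $v_{U_j}$ to produce a set $T_j \subseteq U_j$ (disjoint from the previous $T_l$'s) with $v_i \in N(v_{T_j})$ for every $i \in U_j \setminus T_j$. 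Since $v_{U_j}$ is a subvector of $v$, the monotonicity \eqref{eq:monotone} gives $\rho_\mu(v_{U_j}) \geq \rho_\mu(v)$, and hence
\[ |T_j| \leq \tfrac{2}{\mu}\log \rho_\mu(v_{U_j})^{-1} \leq \tfrac{2}{\mu}\log \rho_\mu(v)^{-1} = d. \]
After $k$ iterations I define $S = T_1 \cup \cdots \cup T_k$, so $|S| \leq kd$, and by construction $[n] \setminus S \subseteq U_j \setminus T_j$ for every $j \in [k]$, which means $v_i \in N(v_{T_j})$ for every $i \notin S$ and every $j$.

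For the final step, since the $T_j$ are pairwise disjoint, $v_S$ is a permutation of the concatenation $v_{T_1} v_{T_2} \cdots v_{T_k}$, which is irrelevant for $\rho_\mu$. Applying \eqref{eq:holder} to this concatenation gives
\[ \rho_\mu(v_S) = \rho_\mu(v_{T_1} \cdots v_{T_k}) \leq \max_{j \in [k]} \rho_\mu(v_{T_j}^{k}). \]
Taking $T := T_{j^*}$ for a $j^*$ attaining this maximum yields $\rho_\mu(v_S) \leq \rho_\mu(v_T^k)$, and because $v_i \in N(v_{T_{j^*}})$ for every $i \notin S$, both required conclusions hold simultaneously.

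I do not expect a serious obstacle here: the argument is a clean iteration on top of Lemma~\ref{lem:initialcont-lem}, and \eqref{eq:holder} is exactly the tool one needs to convert concatenation into the $v_T^k$ bound. The only subtle point is the degenerate case where some $v_{U_j}$ is identically zero, in which case Lemma~\ref{lem:initialcont-lem} returns $T_j = \emptyset$; but then $\rho_\mu(v_{T_j}^k) = 1$ automatically dominates the maximum, forcing $T = \emptyset$, $N(v_T) = \{0\}$, and every remaining $v_i$ equals $0$, so the stated conclusions still hold.
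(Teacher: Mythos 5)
Your proof is correct and essentially identical to the paper's: both iteratively strip out a set $T_j$ via Lemma~\ref{lem:initialcont-lem}, set $S=\bigcup_j T_j$, and then use \eqref{eq:holder} to select the $T_j$ maximizing $\rho_\mu(v_{T_j}^k)$ as the final $T$. Your $U_j$ are the paper's $A_j$, and your explicit appeal to \eqref{eq:monotone} to bound $|T_j|\leq d$ is a step the paper leaves implicit.
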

\begin{proof}
We will define a sequence of sets $[n]=A_1\supseteq \cdots\supseteq A_k\supseteq A_{k+1}$. Given $v_{A_j}$, we choose $T_j\subseteq [n]$ with $ v_{T_j}= (x_1,\ldots,x_{d(j)})$ given by Lemma \ref{lem:initialcont-lem} applied to $v_{A_j}$ and let

 \[A_{j+1}=A_j\setminus T_j \qquad\text{and} \qquad S=\bigcup_{j=1}^k T_j.\] 
By Lemma~\ref{lem:initialcont-lem}, we have that $v_{i}\in N(v_{T_{j}})$ for all $i\in A_{j+1}$.
In particular, since $S^c\subseteq A_j$ for all $1\leq j\leq k+1$, $v_{i}\in N(v_{T_{j}})$ for all $i\not\in S$ and $1\leq j\leq k$. Note also that $|T_j|\leq d$ for all $1\leq j\leq k$. 

 Let $T$ be the $T_j$ for which $\rho_\mu(v_{T_j}^k)$ is maximized.
 The first claim of the lemma follows from the above. 
 For the second claim note that, 
 by \eqref{eq:holder} we have 
\[\rho_\mu(v_S)\leq \max_{1\leq j\leq k}\rho_\mu(v_{T_j}^k)=\rho_\mu(v_{T}^k)\, .\]
\end{proof}

To conclude the proof of our Theorem~\ref{thm:rough-ILwO}---and to understand the strength of Lemma \ref{lem:container-lem}---we introduce our main Fourier ingredient, the proof of which is found in Section~\ref{sec:fourier}.

\begin{lemma}\label{cor:relwin}
Let $\mu \in (0,1/4]$, $k\in \N$ and $v \in \Z_p^\ast$ such that $|v|\neq 0$. Then
\[ \rho_\mu(v^k) \leq  64 k^{-1/5}\rho_\mu(v) + p^{-1}\,. \]
\end{lemma}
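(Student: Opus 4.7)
The plan is to use Fourier inversion on $\Z_p$. Since $\mu \leq 1/4 < 1/2$, the characteristic function
\[
\phi_v(\xi) := \prod_{i}\bigl((1-\mu) + \mu\cos(2\pi \xi v_i/p)\bigr)
\]
of $X_\mu(v)$ is nonnegative on $\Z_p$, so
\[
p\rho_\mu(v^k) = \sum_{\xi \in \Z_p}\phi_v(\xi)^k \quad \text{and} \quad p\rho_\mu(v) = \sum_{\xi \in \Z_p}\phi_v(\xi).
\]
Separating the contribution $\phi_v(0)^k = 1$, which accounts for the additive $p^{-1}$ in the claim, the task reduces to showing $\sum_{\xi \neq 0}\phi_v(\xi)^k \leq 64 k^{-1/5}\,p\rho_\mu(v)$.

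I would split the sum according to the magnitude of $\phi_v$. On $\{\phi_v \leq 1/2\}$ the pointwise bound $\phi_v^k \leq 2^{-(k-1)}\phi_v$ gives a contribution of at most $2^{-(k-1)}p\rho_\mu(v)$, which is negligible compared to $k^{-1/5}p\rho_\mu(v)$. The main work is for the ``near-one'' part $L_0 := \{\phi_v > 1/2\}$, which I would handle by the layer-cake identity
\[
\sum_{\xi \in L_0}\phi_v(\xi)^k = \int_{1/2}^{1} kt^{k-1}\,|\{\phi_v \geq t\}|\,dt.
\]
Setting $\beta = 1-t$ and $A_\beta := \{\phi_v \geq 1-\beta\}$, the problem reduces to proving a Hal\'asz-style super-level-set bound of the form $|A_\beta| \leq C\beta^{1/5}p\rho_\mu(v) + O(1)$; the standard Beta-function computation $\int_0^1 kt^{k-1}(1-t)^{1/5}\,dt = kB(k,\,6/5) = \Theta(k^{-1/5})$ would then close out the estimate.

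The heart of the argument---and the principal obstacle---is producing this super-level-set bound with the correct $\beta$-decay. The naive Markov inequality $|A_\beta|(1-\beta) \leq \sum_\xi \phi_v = p\rho_\mu(v)$ gives only $|A_\beta| \leq 2p\rho_\mu(v)$ with no dependence on $\beta$ whatsoever. To extract the $\beta^{1/5}$ factor, I would apply the higher-moment variant
\[
|A_\beta|\,(1-\beta)^m \leq \sum_\xi \phi_v(\xi)^m = p\rho_\mu(v^m)
\]
together with a strong induction on $k$, whose inductive hypothesis $\rho_\mu(v^m) \leq 64m^{-1/5}\rho_\mu(v) + p^{-1}$ (for $m < k$) supplies the needed decay in $m$. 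The base case $k \leq 64^5$ is trivial, since there $64k^{-1/5} \geq 1$ and monotonicity $\rho_\mu(v^k) \leq \rho_\mu(v)$ suffices. Optimizing the integer choice $m \approx 1/(10\beta)$ balances $(1-\beta)^{-m} \approx e^{1/5}$ against $m^{-1/5} \approx \beta^{1/5}$, yielding the required estimate (with small-$\beta$ cases where $m$ would exceed $k-1$ handled by the fallback $m = k-1$). The specific exponent $1/5$, rather than the Berry-Esseen-natural $1/2$, emerges precisely from this moment-optimization trade-off and is what allows the bootstrap to close cleanly without invoking sharper additive-combinatorial structure of $v$.
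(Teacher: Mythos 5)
Your framework (Fourier inversion, peel off $\xi=0$, discard the $\{\phi_v\le 1/2\}$ layer, apply a layer-cake identity to the near-one part) matches the paper's strategy at the surface level, and your identification of the super-level-set bound $|A_\beta|\lesssim \beta^{1/5}p\rho_\mu(v)+O(1)$ as the crux is exactly right. The genuine gap is in how you propose to prove it. Your Markov step gives $|A_\beta|\le (1-\beta)^{-m}\,p\,\rho_\mu(v^m)$, and you then invoke the inductive hypothesis $\rho_\mu(v^m)\le 64\,m^{-1/5}\rho_\mu(v)+p^{-1}$ with $m\approx c/\beta$. But carrying this through the Beta integral $kB(k,6/5)\approx\Gamma(6/5)k^{-1/5}$, the constant you recover in front of $k^{-1/5}\rho_\mu(v)$ is $64\cdot e^{\Theta(c)}c^{-1/5}\Gamma(6/5)$, and the factor $e^{\Theta(c)}c^{-1/5}\Gamma(6/5)$ is strictly greater than $1$ for every choice of $c>0$ (its minimum is about $1.5$--$1.8$ depending on how tightly you bound $(1-\beta)^{-m}$). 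Worse, this overshoot persists for \emph{every} exponent $\gamma$ in place of $1/5$: one checks that $(e/\gamma)^\gamma\,\Gamma(\gamma+1)>1$ on all of $(0,1]$. So the strong induction never closes; each inductive step inflates the constant by a fixed factor $>1$, and no amount of slack from a large base case or from the exponentially small $\{\phi_v\le 1/2\}$ contribution compensates a \emph{multiplicative} loss. The bootstrap is circular in a way that cannot be repaired by optimizing parameters.

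The missing idea is the additive-combinatorial one that the paper uses to make the level-set comparison non-circular. Writing $F=f_{\mu,v^k}=G^k$ with $G=f_{\mu,v}$, the paper proves (Claim~\ref{claim:CD}) that the $\ell$-fold sumset $\ell\cdot A_\alpha$ is contained in $B_\alpha:=\{G>\alpha\}$, with $\ell=\tfrac18(\mu k)^{1/2}$; this comes from the Hal\'asz-style triangle inequality for $\big(\sum_j\|\xi v_j\|_{\mathbb T}^2\big)^{1/2}$, using the two-sided estimate \eqref{eq:cos-approx}. Cauchy--Davenport then yields $|A_\alpha|\le \ell^{-1}|B_\alpha|+1$, a direct comparison between the level sets of $F$ and of $G$ that does not presuppose the conclusion. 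Integrating that over $\alpha>g:=\rho_\mu(v)$ (so that $|B_\alpha|<p$ and Cauchy--Davenport is non-trivial) gives Lemma~\ref{lem:relwin}, and the final step is a simple case split on whether $\rho_\mu(v)$ exceeds $(\mu k)^{-1/4}$, using the elementary bound $\rho_\mu(v)\lesssim (\mu|v|)^{-1/2}+p^{-1}$. You correctly anticipate that some ``additive-combinatorial structure'' is what one would otherwise need, but you conclude it can be avoided; it cannot, and it is precisely the sumset containment plus Cauchy--Davenport that makes the paper's argument work.
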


\begin{proof}[Proof of Theorem~\ref{thm:rough-ILwO}]
Let $k, n \in \mathbb{N}$ and $v \in \Z_p^n$ be as in the theorem statement. 
By Lemma~\ref{lem:container-lem}, 
there exists $T\subseteq S \subseteq [n]$ with $|T|\leq d$, $|S| \leq kd$ such that $v_{i}\in N(v_T)$ for all $i\not \in S$ and $\rho_\mu(v_S)  \leq \rho_\mu( v_T^k )$. Moreover, since $|v|\geq kd$, the support of $v_T$ is non-zero. Applying Lemma~\ref{cor:relwin} we conclude that 
\[ \rho_\mu(v_S)\leq \rho_\mu(v_T^k) \leq  64 k^{-1/5}\rho_\mu(v_T) + p^{-1} \,. \]
By~\eqref{eq:Lw} and~\eqref{eq:monotone} we then have
\[
|N_\mu(v_T)|\leq \frac{2}{\rho_\mu(v_T)}\leq  \frac{128}{ k^{1/5 }(\rho_\mu(v_S)-p^{-1})}\leq  \frac{256}{ k^{1/5 }\rho_\mu(v)}\, ,
\]
where on the final bound  we use that $\rho_\mu(v)\geq \frac{2}{p}$.
\end{proof}

\section{Proof of Theorem~\ref{thm:main}}\label{sec:proof-of-main}

In this section we prove Theorem~\ref{thm:qnbeta} which, from the discussion in the introduction, implies Theorem~\ref{thm:main}. As we were a bit quick
with this discussion, we take a moment to spell out the proof of this implication. 

Define 
\[q_n(\beta):=\max_{w\in \Z_p^n}\PP\left(\exists~v\in \Z_p^n\setminus\{0\}:~M\cdot v=w~\text{and}~\rho(v)\geq \beta\right)\]
and note the following lemma from \cite{CMMM} (their Lemma 2.1).
\begin{lemma}\label{lem:CMMM}
Let $n\in \N$ and $p>2$ be a prime. Then for every $\beta>0$ \[\PP(\det(M_n)=0)\leq n\sum_{m=n-1}^{2n-3}\left(\beta^{1/8}+\frac{q_m(\beta)}{\beta}\right)\,.\]
\end{lemma}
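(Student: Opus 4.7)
The plan is to follow the Costello--Tao--Vu decoupling strategy for symmetric random matrices, combined with a dichotomy of the witness null vector by its small-ball probability $\rho$.

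Suppose $\det(M_n) = 0$, so there is a nonzero $v \in \Z_p^n$ with $M_n v = 0$. I would expose the randomness of $M_n$ in stages by writing it in block form with a principal submatrix exposed first,
\[ M_n = \begin{pmatrix} M_{m} & X \\ X^T & Z \end{pmatrix}, \]
and partitioning the null vector as $v = (v_1, v_2)$ with $v_1 \in \Z_p^m$. The equation $M_n v = 0$ splits into the relations $M_m v_1 + X v_2 = 0$ and $X^T v_1 + Z v_2 = 0$. A rank-tracking argument on the sequence of principal submatrices (using that extending a symmetric matrix by a row/column pair changes the rank by $0$, $1$, or $2$) identifies, for each singular realisation of $M_n$, an index $m$ in the claimed range at which this decomposition is informative; the prefactor $n$ and the sum over $m$ arise from a union bound over the position of the peeled rows/columns and over these indices.

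For each $m$, I would condition on $M_m$ and on $v_2$ and dichotomise by $\rho(v_1)$. If $\rho(v_1) \geq \beta$, the relation $M_m v_1 = -X v_2$ identifies $v_1$ as a large-$\rho$ solution of $M_m v_1 = w$ with $w$ determined by the exposed data, so by the definition of $q_m(\beta)$ this event contributes at most $q_m(\beta)/\beta$, with the extra $1/\beta$ factor arising from normalising against the lower bound $\rho(v_1) \geq \beta$ when bounding the probability that the random quantity $X v_2$ lands on any specific value. If instead $\rho(v_1) < \beta$, I would apply Costello's quadratic Littlewood--Offord inequality (see \cite{costello-tao-vu}) to the bilinear/quadratic form in the fresh entries of $X$ and $Z$ coming from the orthogonality relation $X^T v_1 + Z v_2 = 0$; since $v_1$ has small-ball probability less than $\beta$, Costello's inequality produces a concentration bound of at most $\beta^{1/8}$, yielding the first term.

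The main obstacle is the clean execution of the decoupling step: the witness $v_1$ must depend only on the exposed submatrix $M_m$ and on $v_2$, and not on the entries of $X, Z$ to which Costello's quadratic inequality is applied. This requires carefully ordering the exposure of randomness and conditioning on the rank profile of the sequence of principal submatrices, and the $1/\beta$ normalisation factor additionally demands some bookkeeping to track when a vector of small-ball probability $\geq \beta$ becomes compatible with the fixed data.
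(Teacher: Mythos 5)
The paper does not prove this lemma; it is imported verbatim from Campos, Mattos, Morris and Morrison as their Lemma 2.1, and the present authors explicitly flag it as such and use it as a black box. So there is no ``paper's own proof'' here to compare against.

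Your reconstruction does capture the main ingredients of the CMMM argument: Costello--Tao--Vu decoupling through a block decomposition, a dichotomy on the small-ball probability $\rho$ of a witness null vector, and Costello's quadratic Littlewood--Offord inequality for the unstructured case (which is indeed where $\beta^{1/8}$ comes from). But two features of the statement are not accounted for by your sketch. First, your rank-tracking heuristic only ever sees principal minors of $M_n$, which have size at most $n$, so it cannot produce the terms with $m$ up to $2n-3$; in the CMMM proof these larger values of $m$ arise from working with an auxiliary symmetric matrix that is not a principal minor of $M_n$. Second, the $1/\beta$ factor has a concrete combinatorial origin there which is not delivered by the soft ``normalising against $\rho(v_1)\geq\beta$'' reasoning you offer. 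You correctly identify the decoupling/conditioning bookkeeping as the main obstacle, but as written these gaps prevent the sketch from closing.
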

Our Lemma~\ref{lem:redux} follows easily.

\begin{proof}[Proof of Lemma \ref{lem:redux}]
	Pick a prime $p = t \exp(c \sqrt{n \log n})$ with $c = 1/800$ and $t\in[1/2,1]$.  Looking to apply Lemma \ref{lem:CMMM} with $\beta = \Theta(n/p)$, we apply the union bound to $q_n(\beta)$ and our assumption for each $n-1 \leq m \leq 2n-3$ to bound 
\[ q_n(\beta) \leq 	\sum_{v : \rho(v) \geq \beta} \max_{w \in \Z_p^n} \PP(Mv = w)    \leq e^{-cn}. \]
	Thus, we apply Lemma \ref{lem:CMMM}, to obtain
	$$\P(\det(M_n) = 0) \leq e^{-c(1 + o(1)) \sqrt{n \log n}/8 } + e^{-cn(1+o(1))} \leq e^{-c\sqrt{ n \log n}/9}\,, $$
for $n$ sufficiently large.	
\end{proof}

\vspace{3mm}

With these reductions firmly in-hand, we turn to prove Theorem~\ref{thm:qnbeta}, and therefore Theorem~\ref{thm:main}.

\begin{proof}[Proof of Theorem~\ref{thm:qnbeta}]
Throughout we assume that $n$ is sufficiently large so that all inequalities in the proof hold, we let
 $k=n^{1/4}$, $d=\frac{2}{\mu}\log p\leq \frac{2}{\mu}\sqrt{n\log n}$ and 
define $\mathcal{V}:=\{v\in \Z_p^n\backslash \{0\} : \rho_\mu(v)\geq \beta\}$. Our task is to bound 
\begin{equation} \label{eq:the-sum} Q_n(\beta) := \sum_{v \in \mathcal V} \max_w\, \P(M_n\cdot v = w). \end{equation}
We start our analysis of \eqref{eq:the-sum} by partitioning this sum by way of a function $f : \cV \rightarrow \cS$. To define $f$, let $v\in \Z_p^n$ and apply 
Lemma \ref{lem:container-lem} to obtain $S,T \subseteq [n]$. We then apply Lemma~\ref{lem:initialcont-lem} to $v_S$ to obtain a further set $T'\subseteq [n]$. 
We then define $f(v)=(S,T,T',v_T,v_{T'})$ and put $\cS := f(\cV)$. We thus partition our sum \eqref{eq:the-sum} as
\begin{equation} \label{eq:the-sum-2} Q_n(\beta) = \sum_{s\in \mathcal{S}}\sum_{v\in f^{-1}(s)} \max_w\,  \P(M_n\cdot v = w). \end{equation}

 Note that if $s=(S,T,T',w_1,w_2)\in \mathcal{S}$, then
 \begin{equation}\label{eq:sprop}
 |S|\leq kd,\quad |w_1|, |w_2| \leq d, \quad \rho_\mu(w_1)\geq \beta, \quad \rho_\mu(w_2)\leq (1-\mu/2)^{|w_2|} \text{\,  and \, } w_2\neq 0\, 
 \end{equation} by Lemmas \ref{lem:initialcont-lem} and~\ref{lem:container-lem} together with \eqref{eq:monotone}, and note that we have the bound 
\begin{equation}\label{eq:Sbd} |\cS|\leq 8^n p^{2d}\, ,\end{equation}
  since there are $8^n$ choices for $S,T,T'$ and at most $p^{2d}$ choices for $w_1,w_2$.

We now turn to bounding a given term in the sum \eqref{eq:the-sum-2}, based on which piece of the partition it is in.
Let $s = (S,T,T',w_1,w_2) \in \cS$ and $v\in f^{-1}(s)$. For any $w\in \Z_p^n$, we bound $\P(M_n\cdot v = w)$ by 
first revealing the rows indexed by $S^c$ and then revealing the 
rows indexed by $S\backslash T'$,
\[ \PP(M\cdot v=w) \leq  \PP\left(M_{(S\backslash T')\times [n]} \cdot v= w_{S\backslash T'} \mid M_{S^c\times[n]}\cdot v = w_{S^c} \right) 
\cdot \PP(M_{S^c\times[n]}\cdot v = w_{S^c}). \]
Looking only on the off-diagonal blocks $(S \setminus T') \times T'$ and $S^c \times S$ and considering the ``worst case'' vectors for these blocks, we have
\[ \PP(M\cdot v=w)  \leq \max_{u}\, \PP(M_{(S\backslash T') \times T'}\cdot v_{T'} = u) \cdot \max_{u}\, \PP(M_{S^c \times S} \cdot v_S = u). \]
The crucial point here is that these events can be written as an intersection of independent events concerning the rows. That is
\begin{equation}\label{eq:P-bound} \PP(M\cdot v=w) \leq \rho(v_{T'})^{|S|-|T'|} \rho(v_S)^{n-|S|} \leq \rho_\mu(v_{T'})^{|S|-|T'|} \rho_\mu(v_S)^{n-|S|},  \end{equation}
where this last inequality follows from the monotonicity of $\rho$ in the parameter $\mu$, noted at \eqref{eq:monotone}.

We now bound the size of a piece of our partition $|f^{-1}(s)|$.
By~\eqref{eq:Lw} together with Lemmas~\ref{lem:initialcont-lem} and~\ref{lem:container-lem}, the number of choices for $v_{S^c}$ and $v_{S\setminus T'}$  are (respectively) at most
\[ |N(w_1)|^{n-|S|}\leq\left(\frac{2}{\rho_\mu(w_1)}\right)^{n-|S|}, \qquad |N(w_2)|^{|S|-|T'|}\leq\left(\frac{2}{\rho_\mu(w_2)}\right)^{|S|-|T'|}, \] 
so that 
\begin{align}\label{eq:fsbound}
|f^{-1}(s)|\leq  \left(\frac{2}{\rho_\mu(w_1)}\right)^{n-|S|}\left(\frac{2}{\rho_\mu(w_2)}\right)^{|S|-|T'|}\, .
\end{align}
By~\eqref{eq:P-bound} and the fact that $|S|\leq kd=o(n)$ (by our choice of parameters), we have
 \begin{equation}\label{eq:finv-w1}
 \sum_{v\in f^{-1}(s)}\max_w\PP(M_{n}\cdot v=w)\leq 2^n
 \left(\frac{\rho_\mu(w_1^k)}{\rho_\mu(w_1)}\right)^{n-|S|}
 \leq
  2^n\left(\frac{\rho_\mu(w_1^k)}{\rho_\mu(w_1)}\right)^{24n/25}
 \, .\end{equation}
We consider first the case where $|w_1| \neq 0$; then 
we may apply Lemma \ref{cor:relwin} to obtain the bound
\[
\rho_\mu(w_1^k)\leq 64(\mu k)^{-1/5} \rho_\mu(w_1)+\frac{1}{p}\, .
\] 
By the bound $\rho_\mu(w_1)\geq \beta =  \Theta(n/p)$, we then have
\[
\frac{\rho_\mu(w_1^k)}{\rho_\mu(w_1)}\leq 64(\mu k)^{-1/5} +\Theta(n^{-1}) \leq n^{-1/24}\, .
\]

Combining this with \eqref{eq:Sbd} and \eqref{eq:finv-w1} 
shows that
\begin{align}
\nonumber\sum_{\substack{s\in \mathcal{S},\\ |w_1|\neq 0}}\sum_{v\in f^{-1}(s)}\max_w\PP(M_{n}\cdot v=w) &\leq |\mathcal{S}| \cdot n^{-n/25} \leq 8^n p^{2d}n^{-n/25}  \\
\label{eq:sum1}&\leq 8^n \exp\left(\frac{4c}{\mu}n \log n- \frac{1}{25} n \log n\right) \leq e^{-n}\, ,
\end{align}
provided $c\leq\mu/200$.
Now if $|w_1|=0$ then there are at most 
\[
|f^{-1}(s)|\leq \left(\frac{2}{\rho_\mu(w_2)}\right)^{|S|-|T'|}
\]
choices for $v$.  Notice that $\rho_\mu(v_S)\leq \rho_\mu(w_2)$ and so
\[\sum_{v\in f^{-1}(s)}\max_w\PP(M_{n}\cdot v=w)\leq \rho_{\mu}(w_2)^{n-|T'|} \left(\frac{1}{\rho_{\mu}(w_2)}\right)^{|S|-|T'|}\leq \rho_{\mu}(w_2)^{n/2}\leq (1-\mu/2)^{n|w_2|/2}\, ,\]
where for the final inequality we used~\eqref{eq:sprop}.
On the other hand, by~\eqref{eq:sprop}, the number of choices for $s=(S, T, T', w_1, w_2)$ such that $|w_1|=0, |w_2|=t$ is at most 
\[
\binom{n}{\leq kd}^3 p^{t}\leq \exp(ct\cdot\sqrt{n\log n}+3kd\log n).
\]
Putting our bounds together, we have
\[
\sum_{\substack{s\in \mathcal{S},\\ |w_1|=0, |w_2|=t}}\sum_{v\in f^{-1}(s)}\max_w\PP(M_{n}\cdot v=w)\leq \exp(ct\cdot\sqrt{n\log n}+3kd\log n-n\mu t/4)\leq e^{-n\mu t/5}\, .
\]
Summing over all $t\geq 1$ (recalling that $w_2\neq 0$) and using~\eqref{eq:sum1}, we conclude that
\[
Q_n(\beta)= \sum_{s\in \mathcal{S}}\sum_{v\in f^{-1}(s)}\max_w\PP(M_{n}\cdot v=w) \leq e^{-\mu n/6}\, ,
\] as desired.\end{proof}

\section{Proof of Lemma~\ref{cor:relwin}}\label{sec:fourier}
In this section, we pin down one final loose end, the proof of Lemma~\ref{cor:relwin}, which is our main Fourier lemma.
For $v \in \Z_p^n$, and $\mu\in[0,1]$ we note a standard Fourier expression for $\rho_\mu(v)$.   Define 
\begin{equation}\label{eq:transform}  f_{\mu,v}(\xi) := \prod_{i = 1}^n ((1 - \mu) + \mu c_p(v_i \xi)), \end{equation}
where we let $c_p(x) = \cos(2\pi x/p)$. We then have
\begin{equation} \label{eq:halasz} \rho_{\mu}(v) = \E_{\xi \in \Z_p} f_{\mu,v}(\xi)\,.\end{equation}
Clearly $|f_{\mu,v}(\xi)| \leq 1$ and for $\mu \leq 1/2$ each of the terms in the product $f_{\mu,v}(\xi)$ is non-negative.
In this case it is natural 
to work with $\log f_{\mu,v}$.
For this, we let $\| x \|_{\T}$ denote the distance from $x\in \R$ to the nearest integer and note the following bounds.
For $\mu\in [0,1/4]$ we have 
\begin{equation}\label{eq:cos-approx}  \mu \| x/p \|_{\T}^2 \leq - \log \left(1 - \mu + \mu c_p( x) \right) \leq 32\mu \|x/p \|_{\T}^2\, , \end{equation}	
which are elementary\footnote{For these explicit constants, note the bounds $a \leq -\log(1 - a) \leq (3/2)a$ for $a \in [0,1/4]$ and $x^2 \leq 1 - \cos(2\pi x) \leq 20 x^2$ for $|x| \leq 1/2$.} and can be found in (7.1) in \cite{tao-vu-book}.

For the following lemma, one of the main results of this section, we need the well-known Cauchy-Davenport inequality which tells us that for $A,B \subseteq \Z_p$
we have $|A+B| \geq \min\{ |A|+|B|-1, p\}$. Here, as usual, $A+B := \{ a+ b : a\in A, b \in B \}$.

A first step towards Lemma \ref{cor:relwin} is to prove it in the case when $\rho_\mu(v)$ is not too large.

\begin{lemma}\label{lem:relwin} 
	Let $\mu \in (0,1/4]$, $v \in \Z_p^\ast$ and $k\in \N$. Then
	\[ \rho_\mu(v^k) \leq \Big( \rho_\mu(v)^{\frac{k-1}{k}} + \frac{8}{\sqrt{\mu k}} \Big) \rho_\mu(v) + p^{-1} .\]
\end{lemma}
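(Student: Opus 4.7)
By applying \eqref{eq:halasz} to $v^k$, we have $\rho_\mu(v^k) = \E_\xi f_{\mu,v^k}(\xi) = \E_\xi f_{\mu,v}(\xi)^k$, since $f_{\mu,v^k} = f_{\mu,v}^k$ directly from the product form \eqref{eq:transform}. Because $\mu \leq 1/4$, $f_{\mu,v}$ takes values in $[0,1]$. With $\tau := \rho_\mu(v)^{1/k}$ and the level set $L := \{\xi \in \Z_p : f_{\mu,v}(\xi) > \tau\}$, I would split the average according to whether $\xi \in L$, using $f_{\mu,v}^k \leq \tau^{k-1} f_{\mu,v}$ off $L$ and $f_{\mu,v}^k \leq 1$ on $L$, to obtain
\[
\rho_\mu(v^k) \leq \tau^{k-1}\rho_\mu(v) + \frac{|L|}{p} = \rho_\mu(v)^{(k-1)/k}\,\rho_\mu(v) + \frac{|L|}{p}.
\]
This already supplies the first contribution to the target bound, reducing the problem to showing $|L|/p \leq (8/\sqrt{\mu k})\rho_\mu(v) + 1/p$.

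To bound $|L|$ I would exploit its approximate additive structure. By \eqref{eq:cos-approx}, every $\xi \in L$ satisfies $\sigma(\xi) := \sum_i \|v_i \xi / p\|_\T^2 \leq -\log(\tau)/\mu$. The triangle inequality for $\|\cdot\|_\T$ combined with $(a_1+\cdots+a_j)^2 \leq j(a_1^2+\cdots+a_j^2)$ then gives $\sigma(\xi_1+\cdots+\xi_j) \leq j^2 \cdot (-\log\tau)/\mu$ for any $\xi_1,\ldots,\xi_j \in L$. Applying \eqref{eq:cos-approx} in the opposite direction produces $f_{\mu,v}(\xi_1+\cdots+\xi_j) \geq \tau^{32j^2}$, so that the $j$-fold sumset satisfies $jL \subseteq L_{\tau^{32j^2}}$, where $L_\beta := \{\xi : f_{\mu,v}(\xi) \geq \beta\}$. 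Markov's inequality then gives $|L_{\tau^{32j^2}}|/p \leq \rho_\mu(v)/\tau^{32j^2} = \rho_\mu(v)^{1-32j^2/k}$, and the Cauchy--Davenport inequality yields $|jL|\geq \min(j|L|-j+1,\,p)$. When $|jL|<p$, rearranging gives
\[
\frac{|L|}{p} \leq \frac{\rho_\mu(v)^{1-32j^2/k}}{j} + \frac{1}{p},
\]
and in the boundary case $|jL|=p$ the trivial Cauchy--Davenport bound $|L|/p \leq 1/j + 1/p$ already suffices.

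The concluding step is to choose $j$ of order $\sqrt{\mu k}$ (concretely, $j = \lfloor \sqrt{\mu k}/8\rfloor$), combined where necessary with the direct Markov bound $|L|/p \leq \rho_\mu(v)^{(k-1)/k}$ to handle each range of $\rho_\mu(v)$. I expect this optimization to be the main obstacle: the Cauchy--Davenport factor $1/j$ and the Markov factor $\rho_\mu(v)^{-32j^2/k}$ pull in opposite directions, and the constants must be tracked carefully so that the prefactor comes out exactly as $8/\sqrt{\mu k}$. Once $|L|/p$ is controlled in this way, combining with the decomposition from the first paragraph yields the stated bound.
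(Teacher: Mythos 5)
The decomposition you propose in the first paragraph, while clean, is lossier than the paper's and cannot be repaired. Bounding $\E[f_{\mu,v}^k\one_L]$ by $|L|/p$ (i.e.\ replacing $F$ by $1$ on $L$) reduces the problem to showing $|L|/p\le \tfrac{8}{\sqrt{\mu k}}\rho_\mu(v)+p^{-1}$, but this inequality is simply \emph{false} in general. Take $v=(a,a,\dots,a)\in\Z_p^n$ with $a\ne 0$ and $p$ large. Then $g:=\rho_\mu(v)\asymp (\mu n)^{-1/2}$, while $L=\{\xi: f_{\mu,v}(\xi)>g^{1/k}\}$; unwinding via \eqref{eq:cos-approx} gives $L\approx\{\xi:\|a\xi/p\|_{\T}\lesssim\sqrt{\log(1/g)/(\mu nk)}\}$, so $|L|/p\asymp\sqrt{\log(1/g)/(\mu nk)}\asymp g\sqrt{\log n/k}$. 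This exceeds $\tfrac{8}{\sqrt{\mu k}}g$ by a factor of order $\sqrt{\mu\log n}$, which is unbounded. In other words, the contribution of the single top level set $L=A_g$ genuinely carries a $\sqrt{\log(1/g)}$ loss that is not present in $\rho_\mu(v^k)$, and no choice of $j$ in the Cauchy--Davenport step can remove it: the ``direct Markov'' fallback $|L|/p\le g^{(k-1)/k}$ only helps when $g$ is extremely close to $1$, and the $g^{1-32j^2/k}/j+1/p$ bound (at its optimum $j\asymp\sqrt{k/\log(1/g)}$) again produces $g\sqrt{\log(1/g)/k}$.

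The paper avoids this by never passing to a single level set. It writes $\E[F\one(F>g)]$ as an integral over all level sets $\int_g^1 |A_t|p^{-1}\,dt$, then applies Claim~\ref{claim:CD} (which is exactly your sumset observation, $\ell A_\alpha\subseteq B_\alpha$, applied uniformly in $\alpha$) together with Cauchy--Davenport to get $|A_t|\le\ell^{-1}|B_t|+1$ for each $t>g$, and finally uses $\int_g^1|B_t|p^{-1}\,dt\le\E G=g$. It is this last integral step — not a bound on any one $|A_t|$ — that produces the clean factor $g/\ell$. Your Fourier/sumset mechanics (the use of \eqref{eq:cos-approx}, the triangle-plus-Cauchy--Schwarz step giving $jL\subseteq L_{\tau^{32j^2}}$, Markov, Cauchy--Davenport) are all sound and essentially identical to the paper's Claim~\ref{claim:CD}; what is missing is the layer-cake integration over $t\in(g,1]$. (A minor additional remark: in the boundary case $|jL|=p$, Cauchy--Davenport gives a \emph{lower} bound on $|L|$, not the upper bound $|L|/p\le 1/j+1/p$ you state; fortunately that case cannot occur when $g<1$ and $j<\sqrt{k/32}$, so it does not affect the argument.) To fix the proof, replace your split into ``$\xi\in L$'' and ``$\xi\notin L$'' by the integral decomposition over all thresholds $t>g$ as in the paper.
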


To prove this lemma, we adopt some temporary notation. Let $F = f_{\mu,v^k}$ and $G = f_{\mu,v}$, be as defined in \eqref{eq:transform} and note that $G = F^{1/k}$. We note also that $F$ is non-negative since $\mu \leq 1/4$. Let $\ell := \tfrac{1}{8}(\mu k)^{1/2} $. For all $\alpha \in (0,1)$, we consider the level sets 
\[ A_{\alpha} := \{ \xi \in \Z_p : F(\xi) > \alpha \} \qquad  B_{\alpha} := \{\xi \in \Z_p : G(\xi) > \alpha \}.\] 

\begin{claim}\label{claim:CD}
For $\alpha\in(0,1)$, we have $\ell \cdot A_{\alpha} \subseteq B_{\alpha}$. \end{claim}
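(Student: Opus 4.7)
The plan is to convert the claim, which is a multiplicative statement about $F$ and $G$, into an additive statement about the ``energies'' $\sum_i \|v_i \xi/p\|_\T^2$. The key tool is the two-sided approximation $\mu \|x/p\|_\T^2 \leq -\log((1-\mu) + \mu c_p(x)) \leq 32\mu \|x/p\|_\T^2$ of \eqref{eq:cos-approx}, which sandwiches $-\log G(\xi)$ between constant multiples of this energy. On the additive side I would exploit the triangle inequality $\|v_i \eta/p\|_\T \leq \sum_j \|v_i \xi_j/p\|_\T$ followed by Cauchy--Schwarz to combine small energies at individual $\xi_j$'s into a small energy at a sum $\xi_1 + \cdots + \xi_\ell$.

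I interpret $\ell \cdot A_\alpha$ as the iterated sumset $A_\alpha + \cdots + A_\alpha$ ($\ell$ copies), which is the interpretation compatible with the forthcoming use of Cauchy--Davenport to lower bound $|B_\alpha|$. So the task is to show that for any $\xi_1, \ldots, \xi_\ell \in A_\alpha$, the sum $\eta := \xi_1 + \cdots + \xi_\ell$ lies in $B_\alpha$, i.e.\ $G(\eta) > \alpha$.

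First, since $F = G^k$, the hypothesis $F(\xi_j) > \alpha$ becomes $-\log G(\xi_j) < (-\log \alpha)/k$, and combined with the \emph{lower} bound in \eqref{eq:cos-approx} this gives $\mu \sum_i \|v_i \xi_j / p\|_\T^2 < (-\log\alpha)/k$ for each $j \in [\ell]$. The triangle inequality together with Cauchy--Schwarz then yield
\[ \sum_i \|v_i \eta / p\|_\T^2 \leq \ell \sum_{j=1}^\ell \sum_i \|v_i \xi_j/p\|_\T^2 \leq \frac{\ell^2 (-\log\alpha)}{\mu k}. \]
Finally the \emph{upper} bound in \eqref{eq:cos-approx} gives $-\log G(\eta) \leq 32 \mu \sum_i \|v_i \eta/p\|_\T^2 \leq 32 \ell^2 (-\log\alpha)/k$, and with $\ell = (\mu k)^{1/2}/8$ this is at most $(-\log\alpha)/8$, so in fact $G(\eta) \geq \alpha^{1/8} > \alpha$, as required.

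I do not anticipate any genuine obstacle here; the argument is essentially careful bookkeeping of the two constants in \eqref{eq:cos-approx} and the factor of $\ell$ arising from Cauchy--Schwarz, with the choice $\ell = (\mu k)^{1/2}/8$ precisely tuned so that these constants close. The only conceptual point worth flagging is the passage from ``$F(\xi_j)$ large'' to ``each individual Fourier factor of $G$ at $\xi_j$ is not too small'', which is exactly what the $k$-th-root relation $G = F^{1/k}$ buys us and is why the same $\ell$ does not suffice with $F$ in place of $G$.
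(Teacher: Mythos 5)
Your proof is correct and follows essentially the same route as the paper's: translate the hypothesis $F(\xi_j)>\alpha$ via $G=F^{1/k}$ and the lower bound of \eqref{eq:cos-approx} into energy bounds at each $\xi_j$, combine them with the triangle inequality to bound the energy at $\eta=\xi_1+\cdots+\xi_\ell$, and convert back via the upper bound of \eqref{eq:cos-approx}. The only cosmetic difference is that the paper phrases the combination step as the $\ell^2$-triangle inequality on the vectors $(\|\xi_i v_j\|_\T)_j$ and then squares, whereas you apply the triangle inequality coordinate-wise and then Cauchy--Schwarz; these are equivalent and produce the identical bound $\sum_j\|\eta v_j/p\|_\T^2\leq \ell^2\log\alpha^{-1}/(\mu k)$ (and your constant tracking through the final step is, if anything, slightly cleaner).
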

\begin{proof}
	To see this, assume $\xi_1,\ldots,\xi_{\ell} \in A_{\alpha}$ and so $G(\xi_i) = (F(\xi_i))^{1/k} > \alpha^{1/k}$ for each $i \in [\ell]$. 
	Taking logs of both sides and applying \eqref{eq:cos-approx} gives, for each $i \in [\ell]$,
	\begin{equation}\label{eq:logbound} \mu \sum_{j = 1}^n \| \xi_i v_j \|_{\T}^2 \leq  -\log G(\xi_i)  \leq k^{-1}\log \alpha^{-1}\,.  \end{equation}
	Thus, using the triangle inequality along with \eqref{eq:logbound} gives
	$$ \left( \sum_{j = 1}^n \|(\xi_1 + \cdots + \xi_{\ell}) v_j \|_{\T}^2\right)^{1/2} \leq 
	\sum_{i = 1}^{\ell} \left(\sum_{j = 1}^n \| \xi_i v_j\|_{\T}^2 \right)^{1/2} 
	\leq  \ell \left(\frac{\log \alpha^{-1}}{\mu k}\right)^{1/2} \,.$$
	It then follows from the upper bound in \eqref{eq:cos-approx} that
	\[ - \log G(\xi_1 + \cdots  + \xi_{\ell} ) \leq 32 \sum_{j = 1}^n \|(\xi_1 + \cdots + \xi_{\ell}) v_j \|_{\T}^2 \leq 32 \frac{\ell^2}{\mu k} \log \alpha^{-1}.  \] 
	Thus, using our choice of $\ell = \frac{1}{8}(\mu k)^{1/2}$, we have $G(\xi_1 + \cdots  + \xi_{\ell} ) > \alpha$,  and so $\xi_1 + \cdots  +\xi_{\ell} \in B_{\alpha}$. \end{proof}

\begin{proof}[Proof of Lemma~\ref{lem:relwin}]
Letting $g:=\EE_\xi G=\rho_\mu(v)$,
	we want to show that $\EE_{\xi} F \leq \left(g^{(k-1)/k}+\frac{8}{\sqrt{\mu k}}\right)g  + p^{-1}$. We do this in two ranges. First we recall that $F^{1/k}= G$ and so
	\[ \EE_\xi \left[ F \one(F\leq g) \right]\leq  \EE_\xi \left[G\cdot g^{(k-1)/k}\right] = g^{\frac{2k-1}{k}}. \]
	Next we treat the $\xi$ for which $F(\xi) > g$. First note that by Markov's inequality $|B_{\alpha}| < p$, for all $\alpha>g$. It follows from Claim~\ref{claim:CD} and the Cauchy-Davenport inequality that $|A_\alpha| \leq \ell^{-1}|B_\alpha|+1$ for all $\alpha>g$. Thus,
	\[ \EE_\xi\left[ F \one\left( F > g \right)\right] =  \int_{g}^{1} |A_t|p^{-1} \, dt \leq  \ell^{-1}\int_{g}^{1} |B_t|p^{-1} \, dt + 1/p \leq g/\ell + 1/p.\]
	 Putting our bounds together we have
\[ \rho_{\mu}(v^k) \leq  \left(g^{(k-1)/k} + \frac{8}{\sqrt{\mu k}}\right)g + 1/p = \left(\rho_\mu(v)^{(k-1)/k}  + \frac{8}{\sqrt{\mu k}}\right)\rho_{\mu}(v) + p^{-1},\]
	as desired.
\end{proof}

To complete our proof of Lemma~\ref{cor:relwin}, we need the following classical result: 
\begin{lemma}
	If $v\in \Z_p^\ast$ with $v\neq0$ then $\rho_{\mu}(v) = \frac{64}{\sqrt{\mu |v|}} + p^{-1}$.
\end{lemma}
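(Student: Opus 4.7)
The strategy is to bound $\rho_\mu(v)$ directly from the Fourier expression \eqref{eq:halasz}, reducing the task to a level-set estimate for the function $\phi(\xi) := \sum_{i=1}^n \|v_i\xi/p\|_{\T}^2$ on $\Z_p$, where $n := |v|$ and only the nonzero entries of $v$ contribute.

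First I would apply the bound $1-\mu+\mu c_p(x) \leq \exp\!\left(-\mu\|x/p\|_{\T}^2\right)$, which is the left-hand inequality in \eqref{eq:cos-approx}, to each factor of $f_{\mu,v}$ to get $f_{\mu,v}(\xi) \leq e^{-\mu\phi(\xi)}$. Splitting off the $\xi=0$ contribution in \eqref{eq:halasz} gives
\[ \rho_\mu(v) \,\leq\, \frac{1}{p} + \frac{1}{p}\sum_{\xi \in \Z_p\setminus\{0\}} e^{-\mu\phi(\xi)}, \]
and the layer-cake formula rewrites the remaining sum as $\int_0^\infty \mu e^{-\mu s}\, |\{\xi \in \Z_p : 0 < \phi(\xi) < s\}|\, ds$. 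Thus it suffices to control the level sets $L_s := \{\xi \in \Z_p : \phi(\xi) \leq s\}$.

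The heart of the argument is the estimate $|L_s| \leq p\sqrt{Cs/n}$ for an absolute constant $C$ and every $s>0$. To prove it, set $m := |L_s|$ and fix any index $i$ with $v_i \neq 0$. Because $p$ is prime and $v_i$ is invertible modulo $p$, the map $\xi\mapsto v_i\xi$ is a bijection of $\Z_p$, so $|v_iL_s|=m$ and
\[ \sum_{\xi \in L_s}\|v_i\xi/p\|_{\T}^2 \,=\, \sum_{\eta \in v_iL_s}\|\eta/p\|_{\T}^2 \,\geq\, \min_{\substack{B\subseteq \Z_p \\ |B|=m}}\,\sum_{\eta \in B}\|\eta/p\|_{\T}^2 \,\geq\, \frac{m^3}{Cp^2}. \]
The rightmost minimum is attained by the $m$ elements of $\Z_p$ closest to $0$ and is evaluated by an elementary sum-of-squares calculation. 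Summing this inequality over the $n$ nonzero indices $i$ and comparing with the trivial bound $\sum_{\xi \in L_s}\phi(\xi) \leq sm$ gives $nm^3/(Cp^2) \leq sm$, i.e.\ $m \leq p\sqrt{Cs/n}$.

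Plugging this estimate into the layer-cake integral and using $\int_0^\infty \mu\sqrt{s}\,e^{-\mu s}\,ds = \sqrt{\pi}/(2\sqrt{\mu})$ yields
\[ \frac{1}{p}\sum_{\xi\neq 0} e^{-\mu\phi(\xi)} \,\leq\, \sqrt{C/n}\cdot \frac{\sqrt{\pi}}{2\sqrt{\mu}} \,=\, \sqrt{\frac{C\pi}{4\mu n}}, \]
which is comfortably at most $64/\sqrt{\mu n}$ for any modest value of $C$. The one substantive step is the level-set bound; the Fourier reduction and layer-cake integration are routine. The key observation in that step is that primality of $p$ together with $v_i \neq 0$ lets us replace $\sum_{\xi \in L_s}\|v_i\xi/p\|_{\T}^2$ by a lower bound \emph{independent of $i$}, which is precisely why the support size $n = |v|$ appears in the denominator and drives the $1/\sqrt{|v|}$ rate characteristic of the Erd\H{o}s--Littlewood--Offord phenomenon.
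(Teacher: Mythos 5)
The statement has an obvious typo: the $=$ should be $\leq$, and your proposal (correctly) reads it as an upper bound.

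Your approach is genuinely different from the paper's, which simply invokes the H\"older-type inequality \eqref{eq:holder} to write $\rho_\mu(v) \leq \rho_\mu(v_j^d) = \rho_\mu(1^d)$ for some $j$, and then cites a local central limit theorem (or a direct computation) for the one-dimensional lazy walk $X_\mu(1^d)$. You instead give a self-contained Hal\'asz-style argument: bound each factor of $f_{\mu,v}$ by $\exp(-\mu\|v_i\xi/p\|_\T^2)$, apply layer-cake to reduce to a level-set bound for $\phi(\xi)=\sum_i\|v_i\xi/p\|_\T^2$, and prove $|L_s|\lesssim p\sqrt{s/|v|}$ by the clever observation that for each nonzero $v_i$ the map $\xi\mapsto v_i\xi$ is a bijection of $\Z_p$, so $\sum_{\xi\in L_s}\|v_i\xi/p\|_\T^2$ has a lower bound independent of $i$. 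The substance is correct. The tradeoff: the paper's route is a two-line deduction that outsources the hard work to standard citations, while yours is longer but entirely elementary and makes the $1/\sqrt{|v|}$ mechanism visible within the same Fourier framework as Section~\ref{sec:fourier}, without needing a local CLT.

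One small quantitative wrinkle worth fixing: the minimum $\min_{|B|=m}\sum_{\eta\in B}\|\eta/p\|_\T^2$ is attained by the $m$ residues nearest $0$, one of which is $0$ itself, so the true lower bound is of order $(m-1)^3/p^2$, not $m^3/p^2$ (indeed for $m=1$ the minimum is $0$). This only matters at the edge $m\in\{1,2\}$, and since you integrate against $|\{\xi: 0<\phi(\xi)<s\}| = |L_s|-1$ anyway, the conclusion survives with a slightly worse absolute constant, still far below $64$. It would be cleaner to state the level-set bound as $|L_s|-1 \leq Cp\sqrt{s/|v|}$ from the start.
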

Letting $d=|v|$, this lemma may be deduced by bounding $\rho_{\mu}(v) \leq \rho_\mu(v_j^d)$ for some $j$ by ~\eqref{eq:holder}, noting that $\rho_\mu(v_j^d) = \rho_\mu(1^d)$ and bounding the latter either directly or using a standard local central limit theorem.  Alternatively, a stronger statement may be found in \cite[Lemma 2.3]{CMMM}.

\begin{proof}[Proof of Lemma~\ref{cor:relwin}]
	If $\rho_\mu(v)\leq (\mu k)^{-1/4}$ then
	Lemma~\ref{lem:relwin} tells us that $\rho_{\mu}(v^k) \leq 64(\mu k)^{-1/5}\rho_{\mu}(v) + p^{-1}$, as desired. On the other hand, if $\rho_\mu(v)> (\mu k)^{-1/4}$, 
	\begin{align*}
	\rho_{\mu}(v^k) = \frac{64}{\sqrt{\mu k |v|}}  + 1/p \leq  64 (\mu k)^{-1/4} \rho_{\mu}(v) + 1/p 
	\end{align*} thus completing the proof. \end{proof}

\bibliographystyle{abbrv}
\bibliography{Bib}

\end{document}